\titlespacing*{\section}{1.0ex}{6ex}{3.0ex}
\titlespacing*{\subsection}{1.0ex}{4.0ex}{3.0ex}
\newtheorem{proposition}{Proposition}[section]
 \newtheorem{note}[proposition]{Note}
\theoremstyle{definition}
\newtheorem{definition}[proposition]{Definition}
\newtheorem{construction}[proposition]{Construction}
\def\cov{\operatorname{Cov}}
\def\N{\mathbb{N}}
\def\R{\mathbb{R}}
\def\Q{\mathbb{Q}}
\def\|{\mid}
\def\cov{\operatorname{cov}}
\title{A countable Boolean algebra that is Reichenbach's common cause complete}
\author[D.~Bure\v{s}ov\'{a}]{Dominika Bure\v{s}ov\'{a}}
\thanks{Department of Cybernetics, 
Faculty of Electrical Engineering, 
Czech Technical University in Prague,
Czech Republic,
{\em e-mail}: buresdo2@fel.cvut.cz}
\date{}                                           
\begin{document}
\begin{abstract}
    The common cause completeness (CCC) is a~philosophical principle that asserts that if we consider two positively correlated events then it evokes a common cause. The principle is due to H.~Reichenbach and has been largely studied in 
    Boolean algebras and elsewhere.
    The results published so far bring about a question whether there is a~small (countable) Boolean algebra with CCC.
    In this note we construct such an example.
\end{abstract}

\maketitle

\section{Introduction}
This note contributes to the previous work~\cite{Buresova,SRS:book2013, Kitajima, Reichenbach}. In the investigation so far, the authors found that the CCC is guaranteed if the Boolean algebra is $\sigma$-complete, has the Darboux property, and the probability measure on it is non-atomic~\cite{SRS:book2013}. 
Of course, one would like to construct as simple CCC example as possible. 
We present a~countable example showing that 
the above conditions are not necessary and
the published 
examples of CCC are far from being the
simplest.

\section{Basic notions}
The basic notions as well as their philosophical meaning are taken from~\cite{SRS:book2013, Reichenbach}. Standardly, the symbol $p(a \mid b) = \frac{p(a \land b)}{p(b)}$ denotes the conditional probability.

\begin{definition}
  Let $a,b$ in $B$. 
  We define $\cov_p(a,b) = p(a \land b) - p(a) \cdot p(b)$. 
\end{definition}

\begin{definition}
    Let  $B$ be a~Boolean algebra and let $p$ be a~finitely additive probability measure on it. Suppose that $a,b,c$ are distinct elements of $B$. Then we say that $c$ 
    is a~\emph{common cause} 
    of $a,b$ if the following four conditions are fulfilled:
    \begin{align}
\tag{RCCP1}
\label{eqn:RCCP1}
    p(a \land b \| c) &= p(a \| c) \, p(b\| c)\,,
\\[.5em]
\tag{RCCP2}
\label{eqn:RCCP2}
    p(a \land b \| c') &= p(a \| c') \, p(b\| c')\,,
\\[.5em]
\tag{RCCP3}
\label{eqn:RCCP3} 
    p(a \| c) &> p(a \| c')\,,
\\[.5em]
\tag{RCCP4}
\label{eqn:RCCP4}
   p(b \| c) &> p(b \| c')\,.
\end{align}
If each pair $a,b \in B$ with positive covariance (i.e., positive correlation) has a~common cause $c$, we say that $(B, p)$ is common cause complete (CCC).
\end{definition}

Recall that if $a,b$ have a common cause then their correlation $\cov_p(a,b)
$ has to be positive~\cite{Reichenbach}. Observe that if $\cov_p(a,b) > 0$ then $p(a) < 1$ and $p(b) < 1$.

 \begin{definition}\label{triv} 
 If there is no positively correlated pair of events, then conditions~\ref{eqn:RCCP1},~\ref{eqn:RCCP2},~\ref{eqn:RCCP3},~\ref{eqn:RCCP4} are automatically satisfied.
Such Boolean algebras are called \textbf{trivially common cause complete}~\cite{SRS:book2013}.
\end{definition}

\section{Results}
The technical part of our study slightly overlaps with~\cite{SRS:book2013}. We simplified and clarified its formulations for the purpose of finding the most direct way in our construction.

\begin{proposition}
    Let  $B$ be a~Boolean algebra and $p$ be a~finitely additive probability measure on $B$. Let $a,b,c \in B$, $c \leq a \land b$, $\cov_p(a,b) > 0$, and $p(c) > 0$. Then $a,b,c$ fulfill the conditions~\ref{eqn:RCCP1},~\ref{eqn:RCCP3} and~\ref{eqn:RCCP4}.
\end{proposition}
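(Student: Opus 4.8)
The plan is to use the hypothesis $c \leq a \wedge b$ to evaluate the conditional probabilities relative to $c$ directly; once that is done, \ref{eqn:RCCP1} is trivial and \ref{eqn:RCCP3}, \ref{eqn:RCCP4} reduce to a one-line estimate.

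First I would check that everything in sight is well-defined. We are given $p(c) > 0$, and since $c \leq a \wedge b \leq a$ we have $p(c) \leq p(a)$; as $\cov_p(a,b) > 0$ forces $p(a) < 1$ (as observed above), it follows that $p(c) < 1$, so $p(c') = 1 - p(c) > 0$ and both $p(\cdot \mid c)$ and $p(\cdot \mid c')$ make sense.

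The crucial step is the observation that $c \leq a \wedge b$ gives $a \wedge c = b \wedge c = (a \wedge b) \wedge c = c$, whence
\[
  p(a \mid c) = p(b \mid c) = p(a \wedge b \mid c) = \frac{p(c)}{p(c)} = 1 .
\]
Condition \ref{eqn:RCCP1} is then immediate, since it reads $1 = 1 \cdot 1$. For \ref{eqn:RCCP3} and \ref{eqn:RCCP4} it remains only to show $p(a \mid c') < 1$ and $p(b \mid c') < 1$. Here I would use that $c \leq a$ implies $a' \leq c'$, so $a' \wedge c' = a'$, and finite additivity on the partition $c' = (a \wedge c') \vee (a' \wedge c')$ gives $p(a \wedge c') = p(c') - p(a')$. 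Since $p(a) < 1$ we have $p(a') > 0$, hence $p(a \wedge c') < p(c')$ and, dividing by $p(c') > 0$, $p(a \mid c') < 1 = p(a \mid c)$. The same argument with $b$ in place of $a$ (using $b' \leq c'$, valid because $c \leq b$) yields \ref{eqn:RCCP4}.

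I do not anticipate any real obstacle: the argument is essentially a computation. The only place where the hypotheses are used in a slightly non-obvious way is in guaranteeing $p(c') > 0$, which needs $p(a) < 1$ — and this is exactly where $\cov_p(a,b) > 0$ enters, since without it one could have $p(c) = 1$ and conditioning on $c'$ would be meaningless.
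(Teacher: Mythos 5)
Your proof is correct and follows essentially the same route as the paper's: RCCP1 reduces to $1=1$ because $c \leq a \land b$, and RCCP3/RCCP4 follow from $p(a \mid c) = 1$ versus $p(a \mid c') = \frac{p(c') - p(a')}{p(c')} < 1$ (the identical identity the paper uses). Your only addition is the explicit check that $p(c') > 0$, which the paper leaves implicit.
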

\begin{proof}
    Since  $c \leq a \land b$ then the condition~\ref{eqn:RCCP1} reduces on $1=1$.
    Next, the assumptions we are to work with are $\cov_p(a,b) > 0$, $p(a) < 1$, ${p(b) < 1}$, $c \leq a \land b \leq a$. 
    In order to verify~\ref{eqn:RCCP3}, it suffices to realize that $\frac{p(a \land c)}{p(c)} = \frac{p(c)}{p(c)} =1$ whereas $\frac{p(a \land c')}{p(c')} = \frac{p(c')-p(a')}{p(c')}<1$. The condition~\ref{eqn:RCCP4} is proved analogously.

\end{proof}

\begin{proposition}\label{3.2}
  Let $B$ a Boolean algebra and let $p$ be a~probability measure on $B$. Let $\cov_p(a,b) > 0$. Then $0 < \frac{p(a \land b) -p(a) \cdot p(b)}{1+ p(a \land b) -p(a)-p(b)} \leq p (a \land b)$.
\end{proposition}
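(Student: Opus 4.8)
The plan is to abbreviate $x = p(a)$, $y = p(b)$, $z = p(a \land b)$ and reduce the whole statement to an elementary inequality among these three real numbers. The only facts I need are the order constraints $0 \le z \le x \le 1$ and $0 \le z \le y \le 1$ (from $a \land b \le a$ and $a \land b \le b$), the inequality $x + y - z = p(a \lor b) \le 1$, and the hypothesis $z - xy = \cov_p(a,b) > 0$.

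First I would deal with the denominator $D := 1 + z - x - y$. From $p(a \lor b) \le 1$ one gets immediately $D = 1 - p(a \lor b) = p(a' \land b') \ge 0$. To upgrade this to strict positivity I would rule out $D = 0$: if $D = 0$ then $z = x + y - 1$, whence $\cov_p(a,b) = z - xy = -(1-x)(1-y) \le 0$, contradicting the hypothesis (this is also where the earlier remark that $p(a) < 1$ and $p(b) < 1$ comes in). Hence $D > 0$. Since the numerator of the fraction is by definition $\cov_p(a,b) > 0$, the fraction is strictly positive, which is the left-hand inequality.

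For the right-hand inequality, positivity of $D$ lets me clear the denominator: $\frac{z - xy}{D} \le z$ is equivalent to $z - xy \le z(1 + z - x - y)$, and rearranging gives $0 \le z^2 - xz - yz + xy = (z - x)(z - y)$. Since $z \le x$ and $z \le y$, both factors are non-positive, so their product is non-negative, and the inequality holds.

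The only slightly delicate point — more a subtlety than a genuine obstacle — is the strict positivity of $D$: the bounds $p(a) < 1$ and $p(b) < 1$ do not suffice on their own, and one must feed the hypothesis $\cov_p(a,b) > 0$ back in as above. Once $D > 0$ is secured, both inequalities are one-line rearrangements.
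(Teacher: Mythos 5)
Your proof is correct and follows essentially the same route as the paper: the right-hand inequality is obtained in both cases by clearing the positive denominator and factoring the result as $\bigl(p(a)-p(a\land b)\bigr)\cdot\bigl(p(b)-p(a\land b)\bigr)\geq 0$. The only cosmetic difference is how the denominator's positivity is secured: the paper expands $(1-p(a))\cdot(1-p(b))>0$ to show the denominator exceeds the positive numerator, while you identify the denominator with $p(a'\land b')\geq 0$ and exclude equality via the same identity, which amounts to the same computation.
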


\begin{proof}
    Since $(1- p(a)) \cdot (1- p(b)) > 0$, we see that
$$p(a \land b) - p(a) \cdot p(b) < 1 + p(a \land b) - p(a) - p(b)$$
and therefore 
$$\frac{p(a \land b) -p(a) \cdot p(b)}{1+ p(a \land b) -p(a)-p(b)}  > 0.$$
In order to prove the other inequality, observe first the inequality of basic probability theory,
$$p(a) \cdot p(b) \geq p(a \lor b) \cdot p(a \land b).$$
When we adjust the desired inequality  $\frac{p(a \land b) -p(a) \cdot p(b)}{1+ p(a \land b) -p(a)-p(b)} \leq p (a \land b)$, by multiplying with the denominator we obtain 
$$p(a \land b) + p(a \land b)^2 - p(a \land b) \cdot p(a) - p(a \land b) \land p(b) \geq p(a \land b) - p(a) \cdot p(b)\,.$$
Manipulating with this inequality, we obtain
$$p(a \land b)^2 - p(a \land b) \cdot p(a) - p(a \land b) \cdot p(b) + p(a) \cdot p(b) \geq 0\,.$$
This is equivalent to
$$\bigl(p(a) - p(a \land b)\bigr) \cdot \bigl(p(b) - p(a \land b)\bigr) \geq 0\,.$$
which is obviously true.
The proof is complete.
\end{proof}
\begin{note}
    The inequality in Proposition~\ref{3.2} was dealt with in the book~\cite{SRS:book2013}, however the proof presented there was erroneous in the fundamental statement 
(4.92). We present a~correct and new proof. 
\end{note}
 
\begin{proposition}
  Let $\cov_p(a,b) > 0, c \leq a \land b$.
  If $c$ is such that $p(c) = \frac{p(a \land b) -p(a) \cdot p(b)}{1+ p(a \land b) -p(a)-p(b)}$ then $c$ is the common cause of $a,b$.
\end{proposition}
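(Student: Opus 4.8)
The plan is to combine the two previous propositions. By the first proposition, since $c \leq a \land b$, $\cov_p(a,b) > 0$, and $p(c) > 0$ (which holds because the given value is strictly positive by Proposition~\ref{3.2}), the triple $a,b,c$ automatically satisfies~\ref{eqn:RCCP1},~\ref{eqn:RCCP3}, and~\ref{eqn:RCCP4}. So the only thing left to check is~\ref{eqn:RCCP2}, namely that $p(a \land b \mid c') = p(a \mid c') \, p(b \mid c')$, and this is where the specific numerical value of $p(c)$ must be used.

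First I would rewrite~\ref{eqn:RCCP2} by clearing the denominator $p(c')^2$ (note $p(c') > 0$ since $p(c) \leq p(a\land b) < 1$), turning it into the equivalent statement
\[
p\bigl((a\land b)\land c'\bigr)\, p(c') = p(a \land c')\, p(b \land c').
\]
Since $c \leq a \land b$, we have $c \leq a$ and $c \leq b$, so $p(a \land c') = p(a) - p(c)$, $p(b \land c') = p(b) - p(c)$, and $p((a\land b)\land c') = p(a\land b) - p(c)$; also $p(c') = 1 - p(c)$. Substituting, the identity to be verified becomes
\[
\bigl(p(a\land b) - p(c)\bigr)\bigl(1 - p(c)\bigr) = \bigl(p(a) - p(c)\bigr)\bigl(p(b) - p(c)\bigr).
\]
Expanding both sides, the $p(c)^2$ terms cancel, and one is left with a linear equation in $p(c)$ whose solution is exactly $p(c) = \dfrac{p(a\land b) - p(a)p(b)}{1 + p(a\land b) - p(a) - p(b)}$ — which is the hypothesised value, the denominator being nonzero because it was shown positive in Proposition~\ref{3.2}.

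The only genuine obstacle is bookkeeping: one must be careful that the substitutions $p(a \land c') = p(a) - p(c)$ etc.\ are legitimate, which relies precisely on $c \leq a \land b$, and one must confirm $0 < p(c) < 1$ so that both conditional probabilities $p(\cdot \mid c)$ and $p(\cdot \mid c')$ are defined — both facts are immediate from Proposition~\ref{3.2} since $0 < p(c) \leq p(a\land b) < 1$. After that the verification of~\ref{eqn:RCCP2} is a one-line algebraic rearrangement, and combining with the first proposition completes the argument that $c$ is a common cause of $a,b$.
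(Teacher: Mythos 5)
Your proposal is correct and follows essentially the same route as the paper: invoke the earlier propositions for~\ref{eqn:RCCP1}, \ref{eqn:RCCP3}, \ref{eqn:RCCP4}, then verify~\ref{eqn:RCCP2} by reducing it, via $c \leq a \land b$, to the identity $\bigl(p(a)-p(c)\bigr)\bigl(p(b)-p(c)\bigr)=\bigl(1-p(c)\bigr)\bigl(p(a\land b)-p(c)\bigr)$, which the hypothesised value of $p(c)$ satisfies. The only difference is presentational (you solve the linear equation for $p(c)$ rather than substituting the given value and rearranging), and your explicit checks that $0<p(c)$ and $p(c')>0$ are a welcome touch the paper leaves implicit.
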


\begin{proof}
    We have to check~\ref{eqn:RCCP2}. Our assumption is $$p(c) -p(c)\cdot p(a) - p(c) \cdot p(b) + p(c) \cdot p (a \land b) = p(a \land b) - p(a) \cdot p(b)\,.$$ Adding $p(c)^2$ to both sides of the equation and reorganizing it, we get:
    \begin{align*}
        &p(c)^2 + p(a) \cdot p(b) - p(a)\cdot p(c) - p(b)\cdot p(c) 
        \\=\ &p(c)^2 + p(a \land b) 
        - p(c) - p(c) \cdot p(a \land b)\,.
    \end{align*}
    This implies that 
    $$\underbrace{\bigl( p(a)- p(c)\bigr)}_{p(a \land c')} \cdot \underbrace{\bigl( p(b)- p(c)\bigr)}_{p(b \land c')} 
    = \underbrace{\bigl(1 - p(c)\bigr)}_{p(c')} \cdot \underbrace{\bigl(p(a \land b) - p(c)\bigr)}_{p(a \land b \land c')}$$ 
    and this is~\ref{eqn:RCCP2}.
\end{proof}


Let us note that finite Boolean algebras can be only trivially common cause complete (Definition~\ref{triv}). Hence a~meaningful case for looking for the CCC begins with a~countable cardinality of $B$.

\begin{proposition}\label{pip}
    There is a~countable Boolean algebra $B$ with a~finitely additive probability measure $p$ such that $(B, p)$ is non-trivially common cause complete.
\end{proposition}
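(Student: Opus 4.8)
The plan is not to build $B$ by a recursion but to exhibit it directly, exploiting the preceding propositions, which already reduce the problem to a statement about the \emph{range} of $p$. Let $B$ be the Boolean algebra of all finite unions of half-open intervals $[x,y)\subseteq[0,1)$ with $x,y\in\Q$ (equivalently, the subalgebra of $2^{[0,1)}$ generated by $\{[0,r):r\in\Q\cap[0,1]\}$); it is countably infinite and atomless. Let $p$ be the restriction of Lebesgue measure, a finitely additive probability measure on $B$. Two elementary features of $(B,p)$ do all the work: (i) if $a,b\in B$, $a\subseteq b$ and $p(a)=p(b)$, then $a=b$ (in canonical form a nonempty finite union of half-open intervals has positive length); and (ii) for every $d\in B$ and every \emph{rational} $t$ with $0\le t\le p(d)$ there is $c\in B$ with $c\subseteq d$ and $p(c)=t$ (trim sub-intervals off the canonical form of $d$). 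Since every value of $p$ is rational, the quantity $\frac{p(a\land b)-p(a)p(b)}{1+p(a\land b)-p(a)-p(b)}$ occurring in the preceding propositions is rational, so (ii) applies to it. Finally, $a=[0,1/2)$, $b=[0,1/3)$ satisfy $\cov_p(a,b)=1/6>0$, so $(B,p)$ is not trivially common cause complete.

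Now fix $a\neq b$ in $B$ with $\cov_p(a,b)>0$; then $p(a\land b)>0$ (else $\cov_p(a,b)=-p(a)p(b)\le 0$) and $p(a),p(b)<1$. Suppose first that $a,b$ are incomparable. Then $a\land b\subsetneq a$ and $a\land b\subsetneq b$ as sets, so by (i) we get $p(a\land b)<p(a)$ and $p(a\land b)<p(b)$; hence by Proposition~\ref{3.2} the rational number $t=\frac{p(a\land b)-p(a)p(b)}{1+p(a\land b)-p(a)-p(b)}$ lies in $(0,p(a\land b))$. Use (ii) to choose $c\in B$ with $c\subseteq a\land b$ and $p(c)=t$. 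Since $p(c)<p(a\land b)\le\min\{p(a),p(b)\}$ we have $c\notin\{a,b\}$, so $a,b,c$ are distinct, and by the last of the preceding propositions (whose hypotheses hold, as $p(c)=t>0$) $c$ is a common cause of $a,b$.

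Suppose instead that $a,b$ are comparable, say $a\subsetneq b$ (the case $b\subsetneq a$ is symmetric). Then $\cov_p(a,b)=p(a)\bigl(1-p(b)\bigr)>0$ gives $0<p(a)$ and $p(b)<1$, and (i) gives $p(a)<p(b)$. Choose $c\in B$ with $a\subsetneq c\subsetneq b$ --- e.g.\ adjoin to $a$ a half-open rational subinterval of $b\setminus a$ of positive length less than $p(b)-p(a)$. Then $0<p(a)\le p(c)\le p(b)<1$, so $p(c),p(c')>0$ and $c\notin\{a,b\}$; moreover $a\land c=a$, $a\land c'=\emptyset$ and $b\land c=c$, whence $p(a\mid c)=p(a)/p(c)>0$, $p(a\mid c')=0$, $p(b\mid c)=1$ and $p(b\mid c')=(p(b)-p(c))/(1-p(c))<1$. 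Now \ref{eqn:RCCP1} reads $p(a\mid c)=p(a\mid c)\cdot 1$, \ref{eqn:RCCP2} reads $0=0\cdot p(b\mid c')$, \ref{eqn:RCCP3} is $p(a)/p(c)>0$, and \ref{eqn:RCCP4} is $1>(p(b)-p(c))/(1-p(c))$; all hold, so $c$ is a common cause of $a,b$. Together with the incomparable case and the witness above, $(B,p)$ is non-trivially common cause complete.

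The one genuinely delicate point is the comparable case, i.e.\ the boundary of Proposition~\ref{3.2}: when $a\subseteq b$ the ``canonical'' common cause supplied by the preceding propositions has measure $p(a\land b)=p(a)$, and by (i) the only element of $B$ below $a\land b$ with that measure is $a$ itself, which the distinctness clause forbids. One must therefore manufacture a strictly intermediate element, and this is exactly where it matters that $B$ is atomless and divisible enough below $b\setminus a$; a rigid countable algebra such as the finite/cofinite subsets of $\N$ fails precisely here. Everything else is the routine identities displayed above.
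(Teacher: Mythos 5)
Your construction is the same as the paper's: the countable algebra of finite unions of rational-endpoint half-open intervals with Lebesgue measure, and the use of Proposition~\ref{3.2} together with the preceding proposition to produce $c\leq a\land b$ with $p(c)=\frac{p(a\land b)-p(a)p(b)}{1+p(a\land b)-p(a)-p(b)}$. The genuine difference is your case split, and it is a real improvement rather than a redundancy: the paper's one-line argument (``there is $c$ with $p(c)=v$'') silently assumes the incomparable case. When $a\subsetneq b$ one has $v=p(a)=p(a\land b)$, and since $p$ is faithful on this algebra the only element below $a\land b$ of that measure is $a$ itself, which the distinctness clause in the definition of a common cause excludes; in fact, as your \ref{eqn:RCCP2} computation shows, no $c\leq a\land b$ can work for comparable pairs, so the scheme of the preceding propositions genuinely breaks down there. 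Your remedy --- choosing $a\subsetneq c\subsetneq b$ and verifying the four conditions directly --- is correct (all the conditional probabilities are well defined because $0<p(a)\leq p(c)\leq p(b)<1$), and your observation that strictness $v<p(a\land b)$ holds exactly for incomparable pairs is what makes the distinctness requirement unproblematic in the other case. You also supply the explicit positively correlated pair needed for ``non-trivially'', which the paper leaves implicit. So: same construction, but your write-up closes a case the paper's proof sketch does not address.
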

\begin{construction}
    Take the interval $(0, 1]$ and the collection of finite disjoint unions of sub-intervals of $(0, 1]$ that have rational endpoints and that are left-open and right-closed. This collection forms a~Boolean algebra. Indeed, this collection is closed under the formation of unions, intersections, and complements in $(0, 1]$. Also, this Boolean algebra is countable since it has the same cardinality as $\bigcup_{i \in \N} \Q^i$ ($\Q$ denotes the rational numbers of $(0, 1]$). Take the probability measure $p$ as the restriction of the Lebesgue measure to~$B$. By the construction, $p$ is common cause complete. Indeed, consider the value $v = \frac{p(a \land b) -p(a) \cdot p(b)}{1+ p(a \land b) -p(a)-p(b)}  > 0$. So $v$ is a~rational number with $v \leq p(a \land b)$. By the definition of $v$ and $p$ it is easily seen that there is an element $c$ such that $c \leq a \lor b$ and $p(c) =v$. This completes the proof.
\end{construction}
Let us shortly comment on the result and its proof. Firstly, the same method gives us also an uncountable Boolean algebra that is non-trivially common cause complete. Indeed, the classic results of the division ring theory assert that there is a~division ring $\tilde{R}$ in the real numbers $\R$ and  $\tilde{R}$ is uncountable and dense in $\R$. In analogy with Construction~\ref{pip}, one takes for $B$ the collection of disjoint unions 
of intervals
in $\tilde{R} \cap (0,1]$.
It suffices to use the Lebesgue measure again.\\

Secondly, though we mainly addressed the Boolean line of CCC, if we stepped into quantum logics (see \cite{SRS:book2013, Gudder1} for the adequate notions on quantum logics), we see that the situation is analogous there (including the fact that finite quantum logics cannot be non-trivially CCC). We can easily construct ``finitely additive'' non-trivially CCC quantum logics, too. We achieve this goal by a~construction called a~horizontal sum (see~\cite{Kalmbach} for the adequate notion in quantum logics) of arbitrarily many copies of the example constructed in Proposition~\ref{pip}. 
Hence in quantum logics we can easily use our Proposition~\ref{pip} for constructing examples of an arbitrarily large (infinite) cardinality.

 \textbf{Acknowledgements.} 
 The author would like to thank her colleagues Pavel Pták, Mirko Navara, and Miroslav Korbelář for professional suggestions that improved the text.\\
 {This work was supported by the CTU institutional support RVO13000.}

\textbf{Data availability statement.} The manuscript has no associated data.

\textbf{Conflict of interest statement.} The author states that there is no conflict of interest.








\begin{thebibliography}{99}

\bibitem{Buresova} Bure\v{s}ov\'{a}, D., Hou\v{s}kov\'{a}, K., Navara, M., Pt\'{a}k, P., Slouka, M., and Ševic, J.: Reichenbach's causal completeness of quantum probability spaces. Submitted.

\bibitem {Gudder1}Gudder, S.P.: \textit{Stochastic Methods in Quantum Mechanics}. North Holland, New York (1979).

\bibitem{SRS:book2013} Hofer-Szab{\'o}, G., R{\'e}dei, M., and Szab{\'o}, L.E.: \textit{The Principle of the Common Cause}. Cambridge University Press, Cambridge (2013).

\bibitem{Kalmbach} Kalmbach, G.: \textit{Orthomodular Lattices}. Academic Press, London (1983).

\bibitem{Kitajima} Kitajima, Y.: Reichenbach's common cause in an atomless and complete orthomodular lattice.
 \textit{Internat. J. Theoret. Phys.} \textbf{47} (2008), 511--519.
 DOI 10.1007/s10773-007-9475-2

 \bibitem{Reichenbach} Reichenbach, H.: \textit{The Direction of Time}. 
University of California Press, Berkeley (1956).


\end{thebibliography}
\end{document}